\newtheorem{theorem}{Theorem}[section]
\newtheorem{lemma}[theorem]{Lemma}
\theoremstyle{definition}
\newtheorem{definition}[theorem]{Definition}
\newtheorem{corollary}[theorem]{Corollary}
\theoremstyle{remark}
\DeclareMathOperator{\Ext}{Ext} 
 \numberwithin{equation}{section}
\begin{document}

\title{On an example of $\delta$-Koszul algebras}

\author{L\"{U} Jia-Feng}
\address{Department of Mathematics, Zhejiang Normal University, Jinhua, Zhejiang, P.R. China 321004}
\email{jiafenglv@zjnu.edu.cn}


\subjclass[2000]{Primary 16S37, 16W50; Secondary 16E30, 16E40}

\date{September 25, 2009 and, in revised form, November 18, 2009.}
\keywords{$\delta$-Koszul algebras, Yoneda algebras}

\begin{abstract}
The main purpose of this paper is to study a concrete example of
$\delta$-Koszul algebras, which is related to three questions raised
by Green and Marcos in \cite{GM}.
\end{abstract}
\maketitle
\section{Introduction}
It is well known that whether the Yoneda algebra of a graded algebra
is finitely generated or not is too complicated to be answered. As
an attempt to discuss this thesis, Green and Marcos introduced the
notion of {\it $\delta$-Koszul algebra} in \cite{GM} in 2005. In
particular, they finished the paper with three questions:

\begin{itemize}
\item For which functions $\delta: \mathbb{N}\rightarrow \mathbb{N}$ is
there a $\delta$-resolution determined algebra?
\item For which functions $\delta: \mathbb{N}\rightarrow \mathbb{N}$ is
there a $\delta$-Koszul algebra?
\item  Is there a bound $N_0\in \mathbb{N}$, such that if $A=A_0
\oplus A_1\oplus A_2\oplus\cdots$ is a $\delta$-Koszul algebra, then
the Yoneda algebra $E(A)=\bigoplus_{n\geq 0}\Ext_A^n(A_0,A_0)$ is
generated by $\Ext_A^0(A_0,A_0)$, $\Ext_A^1(A_0,A_0)$, $\cdots$,
$\Ext_A^{N_0}(A_0,A_0)$?
\end{itemize}

In this paper, we give a sufficient condition for the resolution map
$\delta$ such that there do exist $\delta$-resolution determined
algebras and $\delta$-Koszul algebras. Further, we give an explicit
procedures to construct concrete examples of $\delta$-resolution
determined algebras and $\delta$-Koszul algebras satisfying this
condition. It should be noted that such examples are a special class
of {\it almost Koszul algebras} introduced by Brenner, Butler and
King with the aim to find periodic resolutions for the trivial
extension algebras of path algebras of Dynkin quivers in bipartite
orientation (see \cite{ARS} and \cite{BBK} for the further details)
and give an answer to the third question introduced above.

Now let us introduce some notations and recall some definitions.

Throughout the whole paper, $\Bbbk$ denotes an fixed field,
$\mathbb{N}$ denotes the set of natural numbers. All the positively
graded $\Bbbk$-algebra $A=\bigoplus_{i\geq 0}A_i$ are assumed with
the following conditions:

\begin{itemize}
\item $A_0=\Bbbk\times\cdots\times\Bbbk$, a finite product of $\Bbbk$;
\item $A_i \cdot A_j = A_{i+j}$ for all $0\leq i, j<\infty$;
\item dim$_{\Bbbk}A_i<\infty$ for all $i\geq 0$.
\end{itemize}
\begin{definition}(\cite{GM})\label{1}
Let $A$ be a positively graded algebra. $A$ is called {\it
$\delta$-Koszul} provided the following two conditions:
\begin{enumerate}
\item The trivial $A$-module $A_0$ admits a minimal graded projective
resolution
$$ \quad  \cdots \rightarrow P_n\rightarrow \cdots \rightarrow
P_1\rightarrow P_0\rightarrow A_0\rightarrow 0,$$ such that each
$P_n$ is generated in a single degree, say $\delta(n)$ for all
$n\geq0$, where $\delta$ is a strictly increasing set function;

\item The Yoneda-Ext algebra, $E(A)=\bigoplus_{n\geq
0}\Ext_A^n(A_0,A_0)$, is finitely generated as a graded algebra.
\end{enumerate}

If $A$ only satisfies condition (1), we call $A$ a {\it
$\delta$-resolution determined algebra. }
\end{definition}
\medskip
\section{Main results}
We begin with
\begin{definition}\label{d1}
A set map $f: \mathbb{N}\rightarrow \mathbb{N}$ is called {\it good}
if and only if there exists $N_0\in\mathbb{N}-\{0\}$, such that
\begin{enumerate}
\item $f(i)=i$ for all $0\leq i< N_0$;
\item  $f(i)=f(i-N_0)+f(N_0)$ for all $i\geq N_0$. In particular, if
$N_0\geq 3$, then $f(N_0)=N_0+1$.
\end{enumerate}
\end{definition}

\begin{lemma}\label{lem1}
Let $\delta: \mathbb{N}\rightarrow \mathbb{N}$ be a good set map.
Then $A$ is a $\delta$-resolution determined algebra if and only if
$A$ is a $\delta$-Koszul algebra.
\end{lemma}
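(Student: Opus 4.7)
The plan is as follows. By Definition~\ref{1}, any $\delta$-Koszul algebra is in particular $\delta$-resolution determined, so the ``if'' direction is immediate. The entire content of the lemma is therefore the converse: assuming $A$ is $\delta$-resolution determined and $\delta$ is good, I have to prove that the Yoneda algebra $E(A)=\bigoplus_{n\ge 0}\Ext^n_A(A_0,A_0)$ is finitely generated. I would aim for the sharper statement that $E(A)$ is generated in homological degrees $\le N_0$, i.e.\ by $\bigoplus_{i=0}^{N_0}\Ext^i_A(A_0,A_0)$, which is visibly a finite-dimensional $\Bbbk$-space under the standing hypotheses on $A$.

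The starting point is the bigrading on $E(A)$. Because each term $P_n$ of the minimal graded projective resolution of $A_0$ is generated in the single degree $\delta(n)$, the space $\Ext^n_A(A_0,A_0)\cong\Hom_A(P_n,A_0)$ is concentrated in internal degree $\delta(n)$. The Yoneda product respects the internal grading, and the goodness relation $\delta(n)=\delta(n-N_0)+\delta(N_0)$ for $n\ge N_0$ says precisely that the multiplication
\[
\mu_n:\Ext^{N_0}_A(A_0,A_0)\otimes \Ext^{n-N_0}_A(A_0,A_0)\longrightarrow \Ext^n_A(A_0,A_0)
\]
lands in the unique bidegree where the target lives. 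Thus an induction on $n$ reduces the whole lemma to showing that $\mu_n$ is surjective for every $n>N_0$.

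For the surjectivity step, I would work directly with the minimal graded projective resolution $(P_\bullet,d_\bullet)\to A_0$. Given a class $\eta\in\Ext^n_A(A_0,A_0)$ represented by an $A$-map $P_n\to A_0$ homogeneous of degree $\delta(n)$, I would construct a comparison chain map $\varphi_\bullet:P_{\bullet+N_0}\to P_\bullet$ of internal degree $\delta(N_0)$. Its existence is forced by the projectivity of the $P_i$ together with the fact that $P_{i+N_0}$ is generated in degree $\delta(i+N_0)=\delta(i)+\delta(N_0)$, which matches the shift exactly; this is where ``good'' is used. Then the Yoneda composite of the $N_0$-fold extension represented by $\varphi_\bullet$ with the class obtained from $\eta\circ\varphi_{n-N_0}\in\Ext^{n-N_0}_A(A_0,A_0)$ recovers $\eta$, giving a preimage under $\mu_n$.

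The hard part, as I see it, is this last surjectivity step: a priori, a $\delta$-resolution determined algebra need not have Yoneda products that realize every extension class, and one must extract the hypothesis that $\delta$ is good in an essential way. The leverage comes from minimality: because each $P_i$ is generated in a single degree matching the periodicity $\delta(i+N_0)-\delta(i)=\delta(N_0)$, every nonzero map of the relevant bidegree is forced to lift along the comparison map $\varphi_\bullet$, so no $\Ext$ class can escape the image of $\mu_n$. Once this is nailed down, the inductive argument closes, $E(A)$ is generated by $\bigoplus_{i=0}^{N_0}\Ext^i_A(A_0,A_0)$, and $A$ is $\delta$-Koszul.
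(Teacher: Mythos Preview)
Your reduction is sound: the ``if'' direction is trivial, and for the converse it suffices to show that the Yoneda multiplication
\[
\mu_n:\Ext^{N_0}_A(A_0,A_0)\otimes \Ext^{n-N_0}_A(A_0,A_0)\longrightarrow \Ext^n_A(A_0,A_0)
\]
is surjective for each $n>N_0$, since goodness gives $\delta(n)=\delta(N_0)+\delta(n-N_0)$. But your argument for surjectivity has a genuine gap. First, the composite ``$\eta\circ\varphi_{n-N_0}$'' does not type-check: $\varphi_{n-N_0}$ has codomain $P_{n-N_0}$ while $\eta$ has domain $P_n$. What you presumably want is a factorisation $\eta=g\circ\varphi_{n-N_0}$ for some $g:P_{n-N_0}\to A_0$, but that is precisely the surjectivity you are trying to prove, and there is no reason a \emph{single} comparison map $\varphi_\bullet$ (lifting one chosen class in $\Ext^{N_0}$) should allow every $\eta$ to factor through it. Second, your justification that ``every nonzero map of the relevant bidegree is forced to lift along $\varphi_\bullet$'' conflates two different things: the degree identity $\delta(n)=\delta(N_0)+\delta(n-N_0)$ guarantees only that the product \emph{lands} in the correct bidegree, not that it \emph{fills} it. Surjectivity of $\mu_n$ is a statement about the image of a sum of products $\sum_i\alpha_i\cdot\beta_i$, and establishing it requires an honest argument about the structure of the minimal resolution---essentially the content of Theorem~3.6 of \cite{GM} (compare also Proposition~3.6 of \cite{GMMZ}).

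The paper's own proof is a one-line citation of exactly that theorem, which says (in effect) that for a $\delta$-resolution determined algebra one has $\Ext^m\cdot\Ext^n=\Ext^{m+n}$ whenever $\delta(m)+\delta(n)=\delta(m+n)$; combined with the goodness hypothesis this immediately gives finite generation of $E(A)$. So your strategy is the right one, but the substantive step you flag as ``the hard part'' is not carried out: you would either need to invoke \cite{GM} at that point, or supply the missing argument showing that the comparison maps collectively span $\Hom_A(P_n,A_0)$.
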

\begin{proof}
It is immediate from (Theorem 3.6, \cite{GM}) and Definition
\ref{1}.
\end{proof}

\begin{lemma}\label{lem3} Let $\delta: \mathbb{N}\rightarrow \mathbb{N}$ be a good set map. Then there exist $\delta$-resolution
determined algebras.
\end{lemma}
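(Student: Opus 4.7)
The plan is to exhibit, for every good set map $\delta$ with parameter $N_0$, at least one graded algebra $A$ whose minimal graded projective resolution of $A_0$ has $n$-th term generated in internal degree $\delta(n)$. The key structural feature to exploit is the periodicity $\delta(i+N_0)=\delta(i)+\delta(N_0)$; consequently it suffices to build an algebra whose $N_0$-th syzygy of $A_0$ recovers $A_0$ itself up to a degree shift of $\delta(N_0)$, and whose first $N_0$ syzygies are generated in the correct degrees $0,1,\ldots,N_0-1$ and $N_0+1$ (in the case $N_0\ge 3$).

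For $N_0=1$ one may take any Koszul algebra, and for $N_0=2$ the truncated polynomial algebra $A=\Bbbk[x]/(x^{\delta(2)})$ with $\deg x =1$ already works: its minimal resolution of $\Bbbk$ alternates multiplication by $x$ and by $x^{\delta(2)-1}$, producing internal degrees $0,1,\delta(2),\delta(2)+1,2\delta(2),2\delta(2)+1,\ldots$, which is exactly $\delta$. These small cases settle two of the parameter values and illustrate the strategy.

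For $N_0\ge 3$, I would construct $A$ as a quotient $\Bbbk Q/I$ of a path algebra, guided by the almost Koszul philosophy cited in the introduction (cf.\ \cite{BBK}): choose $Q$ and relations $I$ so that (i) the first $N_0-1$ steps of the minimal resolution are \emph{Koszul-like}, producing consecutive generator degrees $0,1,\ldots,N_0-1$; (ii) an additional higher-degree relation in $I$ forces a new syzygy generator at the $N_0$-th step, landing it in degree $N_0+1$ rather than $N_0$; and (iii) the $N_0$-th syzygy of $A_0$ is isomorphic, up to the shift by $N_0+1$, to $A_0$ itself, so that the resolution becomes $N_0$-periodic. Verification then reduces to a direct hand computation of only the first $N_0$ syzygies, after which the periodicity from (iii) combined with the recursion $\delta(i+N_0)=\delta(i)+(N_0+1)$ delivers all higher degrees automatically. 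The main obstacle is arranging (ii) and (iii) simultaneously: the relations must be fine-tuned so that the jump occurs precisely at step $N_0$ and the $N_0$-th syzygy cleanly recovers a shifted copy of $A_0$, without producing spurious generators of the wrong degree at any intermediate step, and without making the resolution terminate prematurely.
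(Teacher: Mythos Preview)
Your strategy is sound and matches the paper's approach at the level of outline: handle $N_0=1$ with Koszul algebras, $N_0=2$ with $d$-Koszul algebras (your $\Bbbk[x]/(x^{\delta(2)})$ is a perfectly good instance), and for $N_0\ge 3$ look for an almost Koszul algebra whose $N_0$-th syzygy of $A_0$ is $A_0$ shifted by $N_0+1$, so that periodicity plus the recursion $\delta(i+N_0)=\delta(i)+(N_0+1)$ takes care of all higher degrees.

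The genuine gap is that for $N_0\ge 3$ you have only \emph{described} the desiderata (i)--(iii) an algebra should satisfy, not produced one. You yourself flag this as ``the main obstacle,'' and it is the entire substantive content of the lemma in that case: the existence claim is constructive, and a proof must exhibit the algebra and verify its resolution. Saying that the relations ``must be fine-tuned'' is a statement of the problem, not a solution. Without a concrete $Q$ and $I$ and a computation of the first $N_0$ syzygies, nothing has been proved for $N_0\ge 3$.

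For comparison, the paper closes this gap by writing down the preprojective algebra of type $A_{N_0}$: the quiver has vertices $1,\dots,N_0$, arrows $\alpha_i:i\to i{+}1$ and $\beta_i:i{+}1\to i$, and relations $\alpha_i\beta_i-\beta_{i+1}\alpha_{i+1}$, $\alpha_{i+1}\alpha_i$, $\beta_i\beta_{i+1}$. It then computes the minimal resolution of $\Bbbk^{\otimes N_0}$ explicitly enough to read off the generating degrees $0,1,\dots,N_0-1,N_0+1,\dots$ and the periodicity $Q_{n+N_0}\cong Q_n[N_0+1]$. This is precisely an instance of the almost Koszul algebras from \cite{BBK} that you invoke; what is missing from your proposal is naming this algebra (or any other) and carrying out the verification.
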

\begin{proof}
By hypothesis, $\delta$ satisfies
$\delta(i)=\delta(i-N_0)+\delta(N_0)$ for all $i\geq N_0$ and
$\delta(i)=i$ for $i=0,\;1,\;\cdots,\;N_0-1$, where
$N_0\in\mathbb{N}-\{0\}$. We divided the proof into three cases.

(i) If $N_0=1$, Koszul algebras are the desired $\delta$-resolution
determined algebras with $\delta(i)=i$ for all $i\geq 0$ and there
are a lot of Koszul algebras.

(ii) If $N_0=2$, $d$-Koszul algebras are the desired
$\delta$-resolution determined algebras, where the set function
$\delta$ is defined as
$$\delta(i)=\left\{\begin{array}{ll}
\frac{id}{2},  &  \mbox{$i \equiv 0 (\textrm{mod} 2)$,}\\
\frac{(i-1)d}{2}+1,  &  \mbox{$i \equiv 1 (\textrm{mod} 2)$.}
\end{array}
\right..
$$

(iii) If $N_0\geq 3$, let $\Gamma$ be the quiver:
$$\bullet^1\leftrightarrows^{\alpha_1}_{\beta_1}\bullet^2\leftrightarrows^{\alpha_2}_{\beta_2
}\bullet^3\leftrightarrows^{\alpha_3}_{\beta_3}\cdots\leftrightarrows\bullet^{N_0-1}\leftrightarrows^
{\alpha_{N_0-1}}_{\beta_{N_0-1}}\bullet^{{N_0}}.$$ Now let
$$A=\frac{\Bbbk\Gamma}{\langle\alpha_i\beta_i-\beta_{i+1}\alpha_{i+1},
\;\alpha_{i+1}\alpha_i,\;\beta_i\beta_{i+1}:
i=1,\;2,\cdots,{N_0}-2\rangle}.$$

Now we will compute out the minimal graded projective resolution of
the trivial $A$-module $A_0$ as follows.

Let $P_i$ denote the simple $A$-module related to the vertex $i$.

If $N_0=3$, then $\Bbbk^{\otimes 3}$ has the following minimal
graded projective resolution

$\cdots\rightarrow(A\oplus P_2)[6] \rightarrow(A\oplus P_2)[5]
\rightarrow A[4]\rightarrow (A\oplus P_2)[2] \rightarrow (A\oplus
P_2)[1]\rightarrow A\rightarrow\Bbbk^{\otimes 3}\rightarrow 0$.

If $N_0=4$, then $\Bbbk^{\otimes 4}$ has the following minimal
graded projective resolution

$\cdots\rightarrow(A\oplus P_2\oplus P_3)[7] \rightarrow(A\oplus
P_2\oplus P_3)[6] \rightarrow A[5]\rightarrow (A\oplus P_2\oplus
P_3)[3]\rightarrow (A\oplus P_2\oplus P_3)[2] \rightarrow (A\oplus
P_2\oplus P_3)[1]\rightarrow A\rightarrow\Bbbk^{\otimes
4}\rightarrow 0$.

By an induction, we get that the minimal graded projective
resolution of the trivial $A$-module $\Bbbk^{\otimes N_0}$ has the
following general form:

$\cdots\rightarrow (A\oplus (P_2\oplus\cdots\oplus
P_{N_0-1}))[N_0+2]\rightarrow A[N_0+1]\rightarrow
A[N_0-1]\rightarrow \cdots\rightarrow (A\oplus(P_2\oplus\cdots\oplus
P_{N_0-1})\oplus(P_3\oplus\cdots\oplus P_{N_0-2})\oplus\cdots\oplus
P_{\frac{N_0-1}{2}}\oplus P_{\frac{N_0+1}{2}}\oplus
P_{\frac{N_0+3}{2}})[\frac{N_0+1}{2}]\rightarrow
(A\oplus(P_2\oplus\cdots\oplus
P_{N_0-1})\oplus(P_3\oplus\cdots\oplus P_{N_0-2})\oplus\cdots\oplus
P_{\frac{N_0+1}{2}})[\frac{N_0-1}{2}]\rightarrow \cdots\rightarrow
(A\oplus(P_2\oplus\cdots\oplus
P_{N_0-1})\oplus(P_3\oplus\cdots\oplus P_{N_0-2}))[2] \rightarrow
(A\oplus(P_2\oplus\cdots\oplus P_{N_0-1}))[1]\rightarrow
A\rightarrow \Bbbk^{\otimes N_0}\rightarrow 0$ for $N_0$ being odd;

$\cdots\rightarrow (A\oplus (P_2\oplus\cdots\oplus
P_{N_0-1}))[N_0+2]\rightarrow A[N_0+1]\rightarrow
A[N_0-1]\rightarrow \cdots\rightarrow (A\oplus(P_2\oplus\cdots\oplus
P_{N_0-1})\oplus(P_3\oplus\cdots\oplus P_{N_0-2})\oplus\cdots\oplus
P_{\frac{N_0}{2}}\oplus
P_{{\frac{N_0}{2}+1}})[\frac{N_0}{2}]\rightarrow
(A\oplus(P_2\oplus\cdots\oplus
P_{N_0-1})\oplus(P_3\oplus\cdots\oplus P_{N_0-2})\oplus\cdots\oplus
P_{\frac{N_0}{2}}\oplus
P_{{\frac{N_0}{2}+1}})[\frac{N_0}{2}-1]\rightarrow \cdots\rightarrow
(A\oplus(P_2\oplus\cdots\oplus
P_{N_0-1})\oplus(P_3\oplus\cdots\oplus P_{N_0-2}))[2] \rightarrow
(A\oplus(P_2\oplus\cdots\oplus P_{N_0-1}))[1]\rightarrow
A\rightarrow \Bbbk^{\otimes N_0}\rightarrow 0$ for $N_0$ being even.

It is obvious that most terms of the above resolutions are made of
many brackets, in order to avoid some misunderstandings, we
stipulate the following: Given a concrete $N\in \mathbb{N}$, whether
the bracket appears or not is completely determined by the
subscripts of the first object and the last object in the bracket.
If the subscript of the first object is smaller than that of the
last object, then such bracket appears. Otherwise, the bracket does
not appear.

Now it is easy to see that the algebra constructed above is the
desired $\delta$-resolution determined algebra, where $\delta$ is
defined as follows:
$$\delta(i)=\left\{\begin{array}{llll}
\frac{i(N_0+1)}{N_0},  &  \mbox{ $i \equiv 0 (\textrm{mod} N_0)$,}\\
\frac{(i-1)(N_0+1)}{N_0}+1,  &  \mbox{ $i \equiv 1 (\textrm{mod} N_0)$,}\\
\cdots& \cdots \\
\frac{(i\!-\!N_0+1)(N_0+1)}{N_0}+N_0\!-\!1, \quad & \mbox{ $i\equiv
N_0\!-\!\!1 (\textrm{mod} N_0)$.}
\end{array}
\right..
$$

Therefore, we are done.
\end{proof}

Now we will point out that the algebra constructed in the proof of
(Lemma \ref{lem3} (iii)) gives an answer to the third question.

\begin{lemma}\label{lem2}
Let $A$ be the algebra constructed in the proof of (Lemma \ref{lem3}
(iii)) and $E(A)=\bigoplus_{i\geq0}\Ext_A^i(\Bbbk^{\otimes
N_0},\Bbbk^{\otimes N_0})$ the Yoneda algebra of $A$. Then $E(A)$ is
minimally generated by $\Ext_A^0(\Bbbk^{\otimes N_0},\Bbbk^{\otimes
N_0})$, $\Ext_A^1(\Bbbk^{\otimes N_0},\Bbbk^{\otimes N_0})$ and
$\Ext_A^{N_0}(\Bbbk^{\otimes N_0},\Bbbk^{\otimes N_0})$.
\end{lemma}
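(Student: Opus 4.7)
The plan is to exploit the fully explicit minimal projective resolution written down in Lemma \ref{lem3}(iii) and to compare, degree by degree, the subalgebra of $E(A)$ generated by $\Ext^0_A(\Bbbk^{\otimes N_0},\Bbbk^{\otimes N_0})$, $\Ext^1_A(\Bbbk^{\otimes N_0},\Bbbk^{\otimes N_0})$ and $\Ext^{N_0}_A(\Bbbk^{\otimes N_0},\Bbbk^{\otimes N_0})$ with $E(A)$ itself. Throughout I will use that $\Ext^n_A(\Bbbk^{\otimes N_0},\Bbbk^{\otimes N_0})$ is concentrated in the single internal degree $\delta(n)$ and that its $\Bbbk$-dimension equals the number of indecomposable projective summands of $P_n$, which can be read off directly from the two formulas displayed at the end of the proof of Lemma \ref{lem3}(iii).

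First I would pin down where the Yoneda products are forced to vanish for degree reasons. Writing $n=qN_0+r$ with $0\le r<N_0$, one checks by induction using the defining recurrence of the good map that $\delta(n)=q(N_0+1)+r$. A short arithmetic calculation then gives the dichotomy: for $i=q_1N_0+r_1$ and $n-i=q_2N_0+r_2$ with $0\le r_1,r_2<N_0$, the equality $\delta(i)+\delta(n-i)=\delta(n)$ holds exactly when $r_1+r_2<N_0$, and otherwise the sum falls short by $1$. Since the Yoneda product $\Ext^i\cdot\Ext^{n-i}\subseteq\Ext^n$ must respect the internal grading, the only $n\ge 1$ with no admissible splitting $0<i<n$ are $n=1$ and $n=N_0$ (every other $n$ admits either the split $i=r$ when $r>0$, or $i=N_0$ when $q\ge 2$ and $r=0$). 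This automatically forces minimality, because no element of $\Ext^{N_0}$ can be a product of elements of lower positive degree, and the same holds tautologically for $\Ext^1$.

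Second, for generation I would produce explicit cocycle representatives: the elements of $\Ext^1$ correspond to the arrows $\alpha_i,\beta_i$ of $\Gamma$, and a distinguished generator of $\Ext^{N_0}$ corresponds to the "long" syzygy causing the jump $\delta(N_0-1)=N_0-1\mapsto\delta(N_0)=N_0+1$ in the resolution. Using the periodic-like shape of the resolution given in Lemma \ref{lem3}(iii) -- specifically, that $P_{n+N_0}$ is obtained from $P_n$ by the shift $[N_0+1]$ together with the same distribution of summands $A$, $P_2,\dots,P_{N_0-1}$ -- I would lift the chain map representing the generator of $\Ext^{N_0}$ so that, composed with the identity lift, it implements the shift $\Ext^n\to\Ext^{n+N_0}$. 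Combined with the usual Koszul-type induction that products of $\Ext^1$-classes exhaust the $\Ext^n$ for $1\le n<N_0$, and the residue arithmetic above, this shows by induction on $n$ that the subalgebra generated by $\Ext^0,\Ext^1,\Ext^{N_0}$ fills up all of $E(A)$; a dimension count against the counts of summands in $P_n$ then closes the argument.

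The main obstacle is the Yoneda product step in the second paragraph: one needs to build the chain-map lifts between the explicit resolutions carefully enough to verify that the products of chosen generators really do surject onto $\Ext^n$ (and not merely into a proper subspace of the correct internal degree). Since the resolution contains many distinct indecomposable summands $P_2,\dots,P_{N_0-1}$, keeping track of which $\Ext^1$-generators survive multiplication with a given $\Ext^{N_0}$-class, and matching these against the list of summands appearing in $P_n$, is the technically delicate point; once that bookkeeping is organized, the dimension equality with $\dim_\Bbbk\Ext^n$ follows mechanically.
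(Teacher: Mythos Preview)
Your minimality argument is essentially the paper's own: the paper observes that $(\Ext^1)^{N_0}$ lands in internal degree $-N_0$ while $\Ext^{N_0}$ sits in internal degree $-(N_0+1)$, forcing the product to vanish. Your residue arithmetic on $\delta$ is a more systematic version of the same internal-degree obstruction, and is fine.

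The genuine difference is in the generation step. The paper does not build chain maps or count dimensions at all: it simply invokes Proposition~3.6 of \cite{GMMZ}, which applies to any $\delta$-determined algebra whose $\delta$ satisfies $\delta(i)=i$ for $0\le i<N_0$ and $\delta(i)=\delta(i-N_0)+\delta(N_0)$ for $i\ge N_0$, and concludes directly that $\Ext^i=(\Ext^1)^i$ for $0\le i\le N_0-1$ and $\Ext^{kN_0+j}=(\Ext^{N_0})^k\cdot\Ext^j$. So the paper's proof is a two-line citation plus the degree argument. Your route---explicit lifts exploiting the periodicity $\Omega^{N_0}(\Bbbk^{\otimes N_0})\cong\Bbbk^{\otimes N_0}[N_0+1]$ visible in the resolution, followed by a dimension count---would also work and is more self-contained, but it is considerably more laborious, and the ``delicate bookkeeping'' you flag is exactly what the cited proposition packages up. One small imprecision: $\Ext^{N_0}$ is $N_0$-dimensional (since $P_{N_0}=A[N_0+1]$ has $N_0$ indecomposable summands), so there is not a single distinguished generator; rather, the periodicity of the syzygies makes right multiplication by $\Ext^{N_0}$ an isomorphism $\Ext^j\to\Ext^{j+N_0}$ of $\Ext^0$-bimodules, which is what your lift would ultimately establish.
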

\begin{proof}
We first prove that $E(A)$ can be generated in degrees $0$, $1$ and
$N_0$. By hypothesis, the resolution map $\delta$ of $A$ is defined
as

$$\delta(i)=\left\{\begin{array}{llll}
\frac{i(N_0+1)}{N_0},  &  \mbox{ $i \equiv 0 (\textrm{mod} N_0)$,}\\
\frac{(i-1)(N_0+1)}{N_0}+1,  &  \mbox{ $i \equiv 1 (\textrm{mod} N_0)$,}\\
\cdots& \cdots \\
\frac{(i\!-\!N_0+1)(N_0+1)}{N_0}+N_0\!-\!1, \quad & \mbox{ $i\equiv
N_0\!-\!\!1 (\textrm{mod} N_0)$.}
\end{array}
\right..
$$
It is easy to see that $\delta(i)=\delta(i-N_0)+\delta(N_0)$ for all
$i\geq N_0$ and $\delta(i)=i$ for all $0\leq i\leq N_0-1$. By
(Proposition 3.6, \cite{GMMZ}), we have
$$\Ext_A^{i}(\Bbbk^{\otimes N_0},\Bbbk^{\otimes N_0})=(\Ext_A^{1}(\Bbbk^{\otimes
N_0},\Bbbk^{\otimes N_0}))^i$$ for $ 0\leq i\leq N_0-1$ and
$$\Ext_A^{i}(\Bbbk^{\otimes N_0},\Bbbk^{\otimes N_0})=(\Ext_A^{i}(\Bbbk^{\otimes N_0},\Bbbk^{\otimes N_0}))^k
\cdot\Ext_A^{j}(\Bbbk^{\otimes N_0},\Bbbk^{\otimes N_0})$$ for
$i=kN_0+j$, where $0\leq j\leq N_0-1$ and $k\in\mathbb{N}-\{0\}.$
Thus, $E(A)$ can be generated by $\Ext_A^0(\Bbbk^{\otimes
N_0},\Bbbk^{\otimes N_0})$, $\Ext_A^1(\Bbbk^{\otimes
N_0},\Bbbk^{\otimes N_0})$ and $\Ext_A^{N_0}(\Bbbk^{\otimes
N_0},\Bbbk^{\otimes N_0})$.

Now we claim that $\Ext_A^{N_0}(\Bbbk^{\otimes N_0},\Bbbk^{\otimes
N_0})$ can not be generated in lower degrees, i.e., $E(A)$ is
minimally generated by $\Ext_A^0(\Bbbk^{\otimes N_0},\Bbbk^{\otimes
N_0})$, $\Ext_A^1(\Bbbk^{\otimes N_0},\Bbbk^{\otimes N_0})$ and
$\Ext_A^{N_0}(\Bbbk^{\otimes N_0},\Bbbk^{\otimes N_0})$. In fact, it
suffices to prove that $\Ext_A^{N_0}(\Bbbk^{\otimes
N_0},\Bbbk^{\otimes N_0})$ can not be generated by
$\Ext_A^{1}(\Bbbk^{\otimes N_0},\Bbbk^{\otimes N_0})$. Note that
$$(\Ext_A^{1}(\Bbbk^{\otimes N_0},\Bbbk^{\otimes N_0}))^{N_0}=(\Ext_A^{1}(\Bbbk^{\otimes N_0},\Bbbk^{\otimes N_0})_{-1})^{N_0}\subseteq
\Ext_A^{N_0}(\Bbbk^{\otimes N_0},\Bbbk^{\otimes N_0})_{-N_0}.$$But
recall that $\delta(N_0)=N_0+1$, which implies that
$\Ext_A^{N_0}(\Bbbk^{\otimes N_0},\Bbbk^{\otimes
N_0})=\Ext_A^{N_0}(\Bbbk^{\otimes N_0},\Bbbk^{\otimes
N_0})_{-N_0-1}$. Thus, $(\Ext_A^{1}(\Bbbk^{\otimes
N_0},\Bbbk^{\otimes N_0}))^{N_0}=0$.

Therefore, we are done.
\end{proof}

\begin{corollary}\label{co1}
There does not exist a uniform bound of the generation degree for
the Yoneda algebras of $\delta$-Koszul algebras.
\end{corollary}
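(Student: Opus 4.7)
The plan is to read Corollary \ref{co1} as an immediate consequence of Lemma \ref{lem2}: the parameter $N_0 \geq 3$ appearing in Lemma \ref{lem3}(iii) is free, so by letting $N_0$ grow we obtain an infinite family of $\delta$-Koszul algebras whose minimal generation degree as Yoneda algebras grows without bound. The strategy is therefore to negate the statement, pick a candidate bound, and produce a $\delta$-Koszul algebra in the family whose Yoneda algebra exceeds it.

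Concretely, I would fix an arbitrary $M \in \mathbb{N}$ (the alleged uniform bound to be ruled out) and choose any integer $N_0 \geq \max\{M+1,\,3\}$. Define the good set map $\delta_{N_0}: \mathbb{N} \to \mathbb{N}$ by $\delta_{N_0}(i)=i$ for $0 \leq i < N_0$ and $\delta_{N_0}(i)=\delta_{N_0}(i-N_0)+(N_0+1)$ for $i \geq N_0$, as in Definition \ref{d1}; this agrees with the explicit $\delta$ written out at the end of the proof of Lemma \ref{lem3}(iii). Apply Lemma \ref{lem3}(iii) to obtain an algebra $A(N_0)$ which is $\delta_{N_0}$-resolution determined, and then Lemma \ref{lem1} upgrades $A(N_0)$ to a $\delta_{N_0}$-Koszul algebra. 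Finally, Lemma \ref{lem2} identifies $\Ext_{A(N_0)}^{N_0}(\Bbbk^{\otimes N_0},\Bbbk^{\otimes N_0})$ as a \emph{minimal} generator of $E(A(N_0))$, so the generation degree of $E(A(N_0))$ is at least $N_0 > M$. Since $M$ was arbitrary, no uniform bound can exist.

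There is no substantive obstacle: all the heavy lifting has already been done in Lemmas \ref{lem1}, \ref{lem3} and \ref{lem2}, which are parametric in $N_0$. The only point worth double-checking during write-up is that the minimality clause of Lemma \ref{lem2} is indeed used in its full strength, not merely the generation clause; otherwise one might wrongly conclude only that $N_0$ \emph{suffices} as a bound for $A(N_0)$, rather than that $N_0$ is actually required. Apart from this bookkeeping, the corollary is essentially a restatement of the slogan ``Lemma \ref{lem2} applies for arbitrarily large $N_0$.''
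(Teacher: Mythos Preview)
Your proposal is correct and follows essentially the same route as the paper's own proof: assume a uniform bound, pick $N_0$ larger than it, and invoke Lemma~\ref{lem2} to obtain a $\delta$-Koszul algebra whose Yoneda algebra requires a generator in degree $N_0$. Your write-up is in fact slightly more careful than the paper's, since you explicitly enforce $N_0\geq 3$ (so that Lemma~\ref{lem3}(iii) genuinely applies) and you spell out the appeal to Lemma~\ref{lem1} to pass from $\delta$-resolution determined to $\delta$-Koszul; the paper leaves both of these implicit.
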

\begin{proof}
Suppose that we have a uniform bound of the generation degree for
the Yoneda algebras of $\delta$-Koszul algebras, say
$N\in\mathbb{N}$. Now let $N_0=N+1$ in the algebra constructed in
the proof of (Lemma \ref{lem3} (iii)). Then by Lemma \ref{lem2}, we
have that $E(A)$ is minimally generated by $\Ext_A^0(\Bbbk^{\otimes
N+1},\Bbbk^{\otimes N+1})$, $\Ext_A^1(\Bbbk^{\otimes
N+1},\Bbbk^{\otimes N+1})$ and $\Ext_A^{N+1}(\Bbbk^{\otimes
N+1},\Bbbk^{\otimes N+1})$, which is a contradiction.
\end{proof}

Now putting Lemmas \ref{lem1}, \ref{lem3}, \ref{lem2} and Corollary
\ref{co1} together, we have the following result, which is the main
result of this paper.

\begin{theorem}
We have the following statements.
\begin{enumerate}

\item Let $\delta: \mathbb{N}\rightarrow \mathbb{N}$ be a good set
map. Then
\begin{enumerate}
\item there exists a $\delta$-resolution determined algebra,
\item there exists a $\delta$-Koszul algebra.
\end{enumerate}
\item There does not exist any bound $N\in \mathbb{N}$, such that
the Yoneda algebras of all the $\delta$-Koszul algebras can be
generated in degrees in $0$, $1$, $\cdots$, and $N$.
\end{enumerate}
\end{theorem}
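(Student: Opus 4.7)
The plan is to assemble the theorem directly from the previously established lemmas and corollary, since the statement has been arranged so that essentially no new argument is required. The work is bookkeeping: identify which of the earlier results supplies each clause.

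For part (1)(a), I would simply invoke Lemma \ref{lem3}, which, for any good set map $\delta$, exhibits a $\delta$-resolution determined algebra in each of the three cases $N_0=1$ (Koszul), $N_0=2$ ($d$-Koszul), and $N_0\geq 3$ (the explicit quiver algebra $A=\Bbbk\Gamma/I$ with $I$ generated by $\alpha_i\beta_i-\beta_{i+1}\alpha_{i+1}$, $\alpha_{i+1}\alpha_i$, and $\beta_i\beta_{i+1}$). For part (1)(b), I would combine Lemma \ref{lem3} with Lemma \ref{lem1}: the former produces a $\delta$-resolution determined algebra $A$, while the latter guarantees that, under the hypothesis that $\delta$ is good, every $\delta$-resolution determined algebra is automatically $\delta$-Koszul. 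Hence the same $A$ is $\delta$-Koszul.

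For part (2), I would quote Corollary \ref{co1} directly. The contrapositive argument there already supplies, for each candidate bound $N\in\mathbb{N}$, the algebra of Lemma \ref{lem3} (iii) with $N_0=N+1$; by Lemma \ref{lem2} its Yoneda algebra $E(A)$ is minimally generated by $\Ext^0_A$, $\Ext^1_A$ and $\Ext^{N+1}_A$, defeating the bound $N$.

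The main obstacle is not in this final assembly but in the cited lemmas themselves. The two technical points that do the real work are (a) verifying in Lemma \ref{lem3} (iii) that the displayed alternating pattern of syzygies is correct and minimal, so that the projective cover at step $i$ sits in single degree $\delta(i)$; and (b) the degree computation in Lemma \ref{lem2} showing $(\Ext^1_A)^{N_0}\subseteq \Ext^{N_0}_A(\Bbbk^{\otimes N_0},\Bbbk^{\otimes N_0})_{-N_0}$ while $\Ext^{N_0}_A(\Bbbk^{\otimes N_0},\Bbbk^{\otimes N_0})$ is concentrated in internal degree $-N_0-1$, forcing $(\Ext^1_A)^{N_0}=0$ and hence the genuine newness of the degree-$N_0$ generator. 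Once these are in hand, the theorem itself reduces to a one-line citation of Lemmas \ref{lem1}, \ref{lem3}, \ref{lem2} and Corollary \ref{co1}.
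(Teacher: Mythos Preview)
Your proposal is correct and matches the paper's own approach exactly: the theorem is stated immediately after the sentence ``putting Lemmas \ref{lem1}, \ref{lem3}, \ref{lem2} and Corollary \ref{co1} together,'' and no further argument is given. Your identification of which lemma supplies which clause is precisely the intended assembly.
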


\medskip
\noindent{{\it{\bf{Acknowledgments\/}}}} The author would like to
give his thanks to the referee for his/her many valuable
suggestions, which improve the quality of the paper a lot.

\bigskip
\bibliographystyle{amsplain}

\end{document}